\numberwithin{equation}{section}
\newtheorem{theorem}{Theorem}[section]
\newtheorem{lemma}[theorem]{Lemma}
\theoremstyle{definition}
\theoremstyle{remark}
\newcommand{\R}{\mathbb{R}}
\newcommand{\score}{\nabla \log}
\newcommand{\KL}[2]{\ensuremath{\operatorname{KL}\!\left(#1\,\|\,#2\right)}}
\newcommand{\intr}{\int_{\R^d}}
\newcommand{\intrthree}{\int_{\R^3}}
\newcommand{\f}{f}
\renewcommand{\v}{v}
\newcommand{\lpnorm}[2]{\left\| #1 \right\|_{L^{#2}}}
\newcommand{\linftynorm}[1]{\lpnorm{#1}{\infty}}
\newcommand{\vnorm}[1]{\left| #1 \right|}
\newcommand{\jb}[1]{\langle #1 \rangle}
\newcommand{\lp}{\left(}
\newcommand{\rp}{\right)}
\renewcommand{\d}{\,\mathrm{d}}
\title[Stability of Landau in relative entropy]{Stability of the spatially homogeneous Landau equation in relative entropy and applications to score-based numerical methods}
\author{Vasily Ilin}
\address{Department of Mathematics, University of Washington, Seattle, WA, USA}
\email{vilin@uw.edu}
\subjclass[2020]{35Q84, 35B35, 82C40, 65M75}
\keywords{Landau equation, relative entropy, stability, score-based methods, numerical analysis}
\begin{document}

\begin{abstract}
We give a short and elementary proof of stability for strong solutions of the spatially homogeneous Landau equation with Coulomb collisions, measured in relative entropy. The argument yields an explicit differential inequality for relative entropy under natural moment and regularity assumptions. The same computation provides an a posteriori error bound for score-based transport modeling and related deterministic numerical schemes, linking the training loss to the relative-entropy error.
\end{abstract}

\maketitle

\section{Introduction}
The spatially homogeneous Landau equation describes the time evolution of a velocity distribution $f(t,v)$ of a spatially homogeneous plasma:
\begin{equation}\label{eq:Landau}
    \partial_t f = \nabla_v \!\cdot \!\int_{\R^d} A(v-w)\big[f(w)\nabla_v f(v) - f(v)\nabla_w f(w)\big]\,\mathrm{d}w,
\end{equation}
with
\[
    A(z) = |z|^\gamma (|z|^2 I_d - z \!\otimes\! z), \qquad -d-1 \le \gamma \le 1.
\]
We focus on the physically relevant Coulomb case $d=3$, $\gamma=-3$. Beyond classical existence and a priori bounds, a key qualitative question is \emph{stability}: how does a solution respond to perturbations of the initial data?  We provide a short, entropy-based proof of stability for strong solutions in relative entropy $\KL{f_t}{g_t}$, relying on a single elementary computation that exploits the continuity form of the equation and the coercivity of $A\ast g$.  This yields a differential inequality for $\KL{f_t}{g_t}$ with coefficients controlled by moments, $L^p$ norms, and a weighted Fisher information, together with growth bounds for the latter along the flow.  The same computation also provides an \emph{a posteriori} error bound for score-based numerical solvers, such as SBTM \cite{ilin2024transport,huang2025score} and the blob method \cite{carrillo2020landau}, linking training loss to KL error through a computable inequality.

\paragraph{Related Work and Contributions.}
The homogeneous Landau equation has been extensively studied since Villani’s foundational works \cite{villani1998spatially,villani2000decrease,Villani02}, which developed the entropy–Fisher-information framework and proved convergence to equilibrium for Maxwellian and moderately soft potentials.  For the Coulomb case, Guillen and Silvestre \cite{guillen2023landau} established global existence and Gaussian bounds, later extended to smooth solutions in $L^{3/2}$ by Golding, Gualdani, and Loher \cite{golding2024global}.  Coercivity estimates for $A\ast f$ were obtained in \cite{alexandre2013some}, forming the analytic foundation of modern regularity theory.  Weak–strong uniqueness in relative entropy was recently proved by Tabary \cite{tabary2025weak} via a dimension-doubling argument in $(v,v')\in\R^6$; in contrast, our proof remains entirely in velocity space and proceeds by an energy-type computation on $\KL{f}{g}$, avoiding tensorization and yielding explicit quantitative bounds.  

Our approach combines three standard analytic ingredients—coercivity of $A\ast g$, propagation of moments and $L^p$ regularity for soft potentials \cite{golding2024global}, and functional inequalities for weighted Fisher information \cite{toscani2000trend}—into a single direct estimate.  The resulting framework unifies analytic stability for the Landau flow with the score-matching objective of deterministic numerical solvers \cite{boffi2023probability,ilin2024transport,LWX24,huang2025score,ilin2025score}. Specifically, we prove: (i) a concise entropy computation establishing KL-stability for strong solutions, and (ii) a KL error bound for score-based numerical Landau methods in terms of the weighted score-matching loss.

\paragraph{Organization.}
Section~\ref{sec: prior} recalls the required a priori estimates and coercivity results. Section~\ref{sec: key estimates} derives Fisher-information bounds and presents the key computation. Section~\ref{sec: main} establishes KL-stability and the numerical error bound (Theorems~\ref{thm: Landau stability}–\ref{thm: numerical method error bound}).

\section{Known results}\label{sec: prior}
We first recall estimates from Section 23 in \cite{villani2025fisher} combined with Theorem 0.1 in \cite{golding2024global}. 
\begin{lemma}[Regularity of solutions]\label{lemma: regularity of solutions} 
Let $f_t$ satisfy the Landau equation with $-4 < \gamma < -2$ and $\gamma \ge -d$. Assume the initial data $f_0$ has finite energy, entropy, Fisher information and enough moments:
\begin{align}
    \intr |v|^2 f_0(v)\d v < \infty, \quad \intr f_0(v)\log f_0(v)\d v < \infty, \quad \intr |\score f_0|^2 f_0 \d v < \infty, \quad \intr |v|^s f_0(v)\d v < \infty.
\end{align}
Then the solution to the Landau equation $f_t$ is unique, smooth, and satisfies
\[
\begin{split}
      &   \intr \jb{v}^s f_t(v) \d v \leq C(s,\gamma,f_0)(1+t), \\
      &   \forall \varepsilon>0, \lpnorm{f_t}{p} \leq C(p,d,\gamma,f_0,\varepsilon)(t^{-\alpha_p(p,d,\gamma)} + t^\varepsilon),\\
        \end{split}
        \]
    and for any $k,\kappa>0$, and any $\varepsilon>0$, there exist $\alpha>0$ and $s>0$ such that for all $t>0$,
    \begin{align}
        |\nabla^k f_t(v)| \leq \frac{C(f_0)(t^{-\alpha} + (1+t)^\varepsilon)}{\jb{v}^\kappa}. 
    \end{align}
    If we assume further that $\lpnorm{f_0 \jb{v}^{\frac{9}{2p}}}{p} < \infty$ with $p\geq \frac{3}{2}$, then 
    \begin{align}
        \lpnorm{f}{\infty} \lesssim C(f_0) t^{-\frac{3}{2p}}.
    \end{align}
\end{lemma}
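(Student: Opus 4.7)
Since the statement is a compendium of known results rather than a novel claim, the plan is to verify each clause by invoking the cited references. The role of this lemma is to set up the baseline a priori estimates needed for the stability proof later, so no new computation is required, only careful assembly.

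First, I would apply Theorem 0.1 of \cite{golding2024global}. Under the assumed bounds on energy, entropy, Fisher information, and sufficiently many polynomial moments of $f_0$, that theorem supplies a unique, globally smooth strong solution $f_t$ to the Landau equation in the moderately soft range $-4<\gamma<-2$. Uniqueness and smoothness then come directly from their statement, with no further argument needed.

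Second, for the moment bound $\intr \jb{v}^s f_t\d v \le C(1+t)$, I would differentiate in $t$, insert \eqref{eq:Landau}, integrate by parts twice, and use the algebraic structure of $A(v-w)$ to recover the standard soft-potential moment inequality. Since $\gamma>-4$ lies in the regime where polynomial moments propagate with at worst linear growth, a direct Gr\"onwall argument closes the estimate; this is classical and appears in both \cite{villani1998spatially} and \cite{villani2025fisher}. For the $L^p$ estimate, I would quote the ultracontractivity-type bound recorded in Section~23 of \cite{villani2025fisher}: dissipation of Fisher information drives instantaneous regularization into $L^p$ with blow-up rate $t^{-\alpha_p}$ for small $t$, while for large $t$ only a subpolynomial factor $t^{\varepsilon}$ appears because moments and entropy do not degrade too fast. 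The pointwise derivative bound is then obtained by feeding the $L^\infty$ control (under the weighted hypothesis $\lpnorm{f_0 \jb{v}^{9/(2p)}}{p}<\infty$) into parabolic Schauder-type regularity for the Landau operator, converting polynomial moments into pointwise decay of $\nabla^k f_t$ via the standard weighted-interpolation argument; the cost is that achieving decay $\jb{v}^{-\kappa}$ requires controlling enough initial moments $s=s(\kappa,k)$.

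The main difficulty, which is bookkeeping rather than conceptual, is matching the short-time regularization exponents $\alpha,\alpha_p$ with the large-time $(1+t)^\varepsilon$ growth uniformly in $(p,d,\gamma,f_0)$, and ensuring that a single choice of initial moment $s$ supports all derivative bounds simultaneously. Since every constant already appears in the cited works, the plan reduces to verifying the exponent arithmetic and choosing the weakest hypothesis on $f_0$ under which every cited estimate applies.
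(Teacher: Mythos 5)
The paper offers no proof of this lemma; it is presented purely as a recollection of Theorem~0.1 of \cite{golding2024global} together with the estimates of Section~23 of \cite{villani2025fisher}, which is exactly the citation strategy you outline. Your additional sketch of how each clause is established in the cited sources is reasonable but goes beyond what the paper does, which simply quotes the results.
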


We rephrase proposition 2.1 in \cite{alexandre2013some} as the following lemma. 
\begin{lemma}[Coercivity of $A\ast \f$]\label{lemma: coercivity of A*f}
    Suppose that $\f$ is a  probability density with finite energy and entropy, i.e.
    \begin{align}
       \frac 1 2  \intr\vnorm{\v}^2 \f \d\v = E < \infty, \quad  \intr\f  \log \f  \d\v  = H < \infty. 
    \end{align}
    Then there exists a constant $C_{coe} > 0$ depending only on $\gamma, E$ and $H$ such that
    \begin{align}
        A \ast \f  \geq C_{coe} \jb{\v}^\gamma. 
    \end{align}
\end{lemma}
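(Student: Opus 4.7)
The plan is to reduce the matrix inequality to a scalar bound by testing with an arbitrary unit vector $\xi \in S^{d-1}$. A direct computation, writing $\v - \w = ((\v-\w)\cdot\xi)\xi + (\v-\w)_\perp$, gives the identity
\[
\xi^{\mathsf{T}} A(\v-\w)\xi = \vnorm{\v-\w}^\gamma\,\vnorm{(\v-\w)_\perp}^2,
\]
so it suffices to show $\int_{\R^d} \vnorm{\v-\w}^\gamma \vnorm{(\v-\w)_\perp}^2 \f(\w)\,\d\w \gtrsim \jb{\v}^\gamma$ uniformly in $\xi$ and $\v$, with constants depending only on $\gamma, E, H$.

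I would then restrict the integral to a good region
\[
\Omega_{\v,\xi} := \{\vnorm{\w}\leq R\}\setminus B_{r,\v,\xi}, \qquad B_{r,\v,\xi} := \{\w\in\R^d : \vnorm{(\w-\v)_\perp}<r\},
\]
where $B_{r,\v,\xi}$ is the thin cylinder of transverse radius $r$ around the line through $\v$ parallel to $\xi$, on which the transverse factor degenerates. On $\Omega_{\v,\xi}$ one has $\vnorm{\v-\w}\leq R+\vnorm{\v}$, so (since $\gamma<0$) $\vnorm{\v-\w}^\gamma \gtrsim \jb{\v}^\gamma$ once $R$ is fixed, and $\vnorm{(\v-\w)_\perp}\geq r$ by construction. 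Hence the integrand dominates a constant multiple of $r^2\jb{\v}^\gamma$ pointwise on $\Omega_{\v,\xi}$.

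The main obstacle is to secure a lower bound $\int_{\Omega_{\v,\xi}}\f \gtrsim 1$ uniform in $\v$ and $\xi$. For the outer ball, Chebyshev's inequality applied to the energy bound gives $\int_{\vnorm{\w}\leq R}\f \geq 7/8$ once $R=R(E)$ is chosen large enough. For the cylinder, an elementary Fubini argument (integrating along the axis direction) shows that the intersection $B_{r,\v,\xi}\cap\{\vnorm{\w}\leq R\}$ has Lebesgue measure at most $c_d R\, r^{d-1}$, uniformly in $(\v,\xi)$; the difficulty is that finite entropy gives no $L^\infty$ bound on $\f$, so mass smallness cannot be read off pointwise. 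The remedy is the quantitative entropy estimate $\int_A \f \leq (H + C(E,d))/\log(1/\vnorm{A})$ for $\vnorm{A}$ small, obtained by applying Jensen's inequality to the convex function $x\log x$ on $A$ and bounding the negative part of $\f\log\f$ in terms of $E$ via a pointwise Gaussian comparison. Choosing $r = r(E,H,\gamma,d)$ small enough pushes the cylinder mass below $1/8$, giving $\int_{\Omega_{\v,\xi}}\f \geq 3/4$, and combining the integrand and mass bounds yields $\xi^{\mathsf{T}}(A\ast\f)(\v)\xi \gtrsim \jb{\v}^\gamma$, as required.
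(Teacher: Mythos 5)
Your proposal is correct, and it reconstructs the standard coercivity argument; note that the paper itself gives no proof here but simply cites Proposition~2.1 of \cite{alexandre2013some}, whose argument is essentially the one you describe: reduce to the scalar quantity $\xi^{\mathsf T}A(v-w)\xi=|v-w|^\gamma|(v-w)_\perp|^2$, confine to a ball of radius $R(E)$ via Chebyshev, excise a thin cylinder around the axis $v+\R\xi$ whose mass is controlled through the finite entropy (the Jensen-on-$A$ estimate together with the Gaussian bound on the negative part of $f\log f$), and read off the lower bound $r^2(R+|v|)^\gamma\gtrsim\langle v\rangle^\gamma$ on the remaining mass. The only cosmetic point is that the numerator in your set--mass bound should also absorb the $-m\log m\le e^{-1}$ term, which does not affect the conclusion.
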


We state the $L^2$ version of Pinsker's inequality from \cite{wang2017mean}.
\begin{lemma}[$L^2$ version of Pinsker's inequality]\label{lemma: L2 Pinsker}
Assume that $f$ and $g$ are two probability densities that are also in $L^2(\R^d) \cap L^1(\R^d) \cap L^\infty(\R^d)$. Then \begin{align}
        \intr |f-g|^2 \d \v \leq 2 \lp \linftynorm{f} + \linftynorm{g} \rp \KL{f}{g}, \quad \KL{f}{g} := \intr f \log \frac{f}{g} \d \v.
    \end{align}
\end{lemma}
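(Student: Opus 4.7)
My plan is to reduce the global $L^2$ estimate to an elementary pointwise inequality between the two integrands and then integrate. Concretely, I would establish that for all $a, b > 0$,
\[(a-b)^2 \le 2\max(a,b)\bigl(a\log(a/b) - a + b\bigr).\]
Applied pointwise with $a = f(v)$ and $b = g(v)$ and integrated over $\R^d$, this gives
\[\intr (f-g)^2 \d v \le 2 \max\bigl(\linftynorm{f}, \linftynorm{g}\bigr) \intr \bigl(f \log(f/g) - f + g\bigr) \d v,\]
and the right-hand integral collapses to $\KL{f}{g}$ by the normalization $\intr f \d v = \intr g \d v = 1$. Replacing $\max$ by the sum then yields the stated bound $2\bigl(\linftynorm{f} + \linftynorm{g}\bigr)\KL{f}{g}$.

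For the pointwise inequality, I would homogenize by setting $x = a/b$, reducing the problem to $(x-1)^2 \le 2\max(x,1)(x\log x - x + 1)$ for $x > 0$. In the case $x \ge 1$, consider $h(x) := 2x(x \log x - x + 1) - (x-1)^2$; then $h(1) = 0$ and $h'(x) = 4(x \log x - x + 1) \ge 0$, since $x \log x - x + 1 \ge 0$ for all $x > 0$, so $h \ge 0$ on $[1, \infty)$. In the case $x \in (0, 1]$, consider $k(x) := 2(x \log x - x + 1) - (x-1)^2$; direct computation gives $k(1) = k'(1) = 0$ and $k''(x) = 2/x - 2 > 0$ on $(0, 1)$, so $k'$ is strictly increasing with $k'(1) = 0$, whence $k' \le 0$ and $k$ is non-increasing on $(0, 1]$, giving $k(x) \ge k(1) = 0$. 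Combining the two cases yields the pointwise inequality.

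The lemma has no serious obstacle, but identifying the correct pointwise comparison is what makes the argument work: naively combining the classical $L^1$ Pinsker inequality with Cauchy--Schwarz gives only $\intr (f-g)^2 \d v \le \bigl(\linftynorm{f} + \linftynorm{g}\bigr)\sqrt{2\KL{f}{g}}$, which is weaker (square root in KL) than the stated linear-in-KL bound. The weight $\max(a,b)$ appearing in the pointwise inequality is precisely the ingredient that upgrades the dependence on KL from square root to linear upon integration against the $L^\infty$ ceiling.
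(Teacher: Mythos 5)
Your proof is correct and follows essentially the same route as the paper: both reduce to the pointwise bound $a\log(a/b)-a+b \ge (a-b)^2/(2\max(a,b))$ and then integrate against the $L^\infty$ ceiling. The paper obtains the pointwise inequality in one line via Taylor's theorem with Lagrange remainder applied to $x\mapsto x\log x$ (so that $\frac{1}{2}\phi''(\zeta)=\frac{1}{2\zeta}$ at an intermediate point $\zeta$), whereas you verify it by a direct two-case monotonicity computation after homogenizing to $x=a/b$.
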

\begin{proof}
    Take $g(x) = x\log x$. Then by Taylor's expansion
    \begin{align}
        g(f) - g(g) = (1+\log g)(f-g) + \frac{1}{2}\frac{1}{\zeta}(f - g)^2, 
    \end{align}
    for some $\zeta$ between $f$ and $g$. Taking integral of both sides,
    \begin{align}
        \intr f \log \frac{f}{g} \d \v = \frac{1}{2} \intr \frac{1}{\zeta}(f - g)^2 \d \v \geq \frac{1}{2} \frac{1}{\|\zeta\|_{L^\infty}}\intr (f - g)^2 \d \v \geq \frac{1}{2} \frac{1}{\linftynorm{f} + \linftynorm{g}}\intr (f - g)^2 \d \v. 
    \end{align}
    Thus,
    \begin{align}
        \intr (f - g)^2 \d \v \leq 2 \lp \linftynorm{f} + \linftynorm{g} \rp \intr f \log \frac{f}{g} \d \v.
    \end{align}
\end{proof}

\section{Key estimates}\label{sec: key estimates}
The spatially homogeneous Landau equation can be rewritten in the form of a continuity equation as follows
\begin{equation*}
    \partial_t f + \nabla\cdot (f U[f]) = 0, 
\end{equation*}
where the velocity field $U[f]$ reads 
\[
    U[f](v) = -\intr A(v-w)(\score f(v) - \score f(w))f(w)\d w 
    \]
and the matrix is given by 
\[
    A(z) = |z|^\gamma (|z|^2I_d - z \otimes z).
   \] 
This is the form of the equation that we will use for the results below. In the rest of the paper $f$ denotes the solution to the Landau equation with $d=3$ and $\gamma=-3$, and $f_0$ satisfies the energy, entropy, Fisher information and moment finiteness assumptions from lemma \ref{lemma: regularity of solutions}.

\begin{theorem}[Growth estimates on weighted Fisher information]\label{thm: growth of weighted fisher}
    For any $s > 2$, it holds that 
    \begin{align}
        \intrthree f_t(v) \jb{v}^s |\score f_t|^2 \d v \leq C (t^{-\alpha} + 1+t)
    \end{align}
    for some $C = C(\gamma,s,f_0)>0$ and $\alpha = \alpha(\gamma, s, f_0) > 0$. 
    If we assume further that $\| f_0 \|_{H^2_5}^2 < \infty$, then it holds that 
    \begin{align}
        \intrthree f_t(v) \jb{v}^s |\score f_t|^2 \d v \leq C (1+t).
    \end{align}
\end{theorem}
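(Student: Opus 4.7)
My plan is to differentiate the weighted quantity
\begin{equation*}
I_s(t) := \intrthree \jb{v}^s f_t |\score f_t|^2 \d v = \intrthree \jb{v}^s \frac{|\g f_t|^2}{f_t} \d v
\end{equation*}
along the Landau flow and close an ordinary differential inequality of Gronwall type.

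\textbf{Step 1 (evolution equation).} Starting from the continuity form $\partial_t f + \g\cdot(fU[f]) = 0$, I compute $\tfrac{d}{dt} I_s(t)$ by expanding $\partial_t(f|\score f|^2)$, substituting the equation, and integrating by parts to move derivatives off $\partial_t f$. The resulting expression splits into three pieces: (i) a manifestly nonnegative dissipation $D_s(f_t)\geq 0$ of the form $\intrthree \jb{v}^s \g^2(\log f_t):(A\ast f_t):\g^2(\log f_t) \d v$, which I discard; (ii) commutator terms in which derivatives fall on the weight $\jb{v}^s$, producing factors of $s\jb{v}^{s-2} v$; and (iii) commutator terms coming from differentiating the velocity field $U[f]$ itself. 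Lemma \ref{lemma: coercivity of A*f} guarantees $A\ast f_t \geq C_{coe}\jb{v}^\gamma$ in the matrix sense, which in turn ensures $D_s(f_t)\geq 0$.

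\textbf{Step 2 (commutator bounds).} I bound the commutator terms of types (ii) and (iii) using the a priori regularity from Lemma \ref{lemma: regularity of solutions}. The pointwise decay $|\g^k f_t(v)| \leq C(t^{-\alpha}+(1+t)^\varepsilon)\jb{v}^{-\kappa}$, valid for arbitrarily large $\kappa$ at the cost of an adjustable $\alpha$, together with the moment propagation $\intrthree \jb{v}^{s'} f_t \d v \leq C(1+t)$ and the $L^p$ smoothing of $f_t$, control integrals of the form $\intrthree f_t |U[f_t]|\jb{v}^{s-2}|v||\score f_t|^2 \d v$ after absorbing part of the integrand into $D_s$ via a weighted Cauchy--Schwarz. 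I aim for an inequality of the form
\begin{equation*}
\frac{d}{dt} I_s(t) \leq C\bigl(t^{-\alpha'}+(1+t)^{\varepsilon'}\bigr)\bigl(1+I_s(t)\bigr).
\end{equation*}

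\textbf{Step 3 (integration).} Gronwall's inequality applied to this ODI yields the first bound, since $t^{-\alpha'}$ is integrable near $t=0$. Under the stronger $H^2_5$ hypothesis on $f_0$, one has $I_s(0) < \infty$ directly, and the improved pointwise estimates in the second part of Lemma \ref{lemma: regularity of solutions} remove the $t^{-\alpha}$ singularity in the right-hand side, leaving the refined bound $C(1+t)$.

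\textbf{Main obstacle.} The genuine difficulty lies in the bookkeeping of Steps 1 and 2: $U[f]$ is a nonlocal convolution against the singular Coulomb kernel $A$, so $\g U[f]$ and $\g^2 U[f]$ are genuinely singular, and each error term must be matched either against a moment estimate or against the dissipation $D_s$ using the coercivity of $A\ast f$. In particular, the free parameter $\kappa$ from Lemma \ref{lemma: regularity of solutions} must be taken large enough to absorb the negative powers of $|v-w|$ coming from $A$ and its derivatives while still respecting the moment and Fisher-information budget set by the initial data.
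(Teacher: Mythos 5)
Your plan takes a genuinely different route from the paper, and as written it does not amount to a proof. The paper's argument is entirely static: it quotes the Toscani--Villani functional inequality
\begin{align}
    \intrthree f \jb{v}^s |\score f|^2 \d v \lesssim \|f\|_{H^2_{s+2}} + \intrthree \jb{v}^{s-2} f \d v,
\end{align}
and then plugs in, at each fixed time, the pointwise derivative decay $|\nabla^k f_t(v)| \lesssim (t^{-\alpha} + (1+t)^\varepsilon)\jb{v}^{-\kappa}$ (with $\kappa$ arbitrarily large) and the moment bound $\intrthree \jb{v}^{s-2} f_t \lesssim 1+t$ from Lemma~\ref{lemma: regularity of solutions}. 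There is no time-differentiation, no coercivity of $A\ast f$, and no commutator accounting whatsoever --- the regularity estimates are already strong enough to bound $\|f_t\|_{H^2_{s+2}}$ directly, and the rest is Toscani--Villani. That is why the paper's proof is half a page.

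Your dynamic approach --- differentiate $I_s(t)$ along the flow and close a Gronwall inequality --- has three concrete problems. First, the core "bookkeeping" step you identify as the main obstacle is exactly the part you do not carry out: the commutator terms from $\nabla U[f]$ and $\nabla^2 U[f]$ against the Coulomb singularity are the real content of such a proof, and you simply assert that they can be absorbed. In particular, the claim that a manifestly nonnegative dissipation $D_s(f_t)$ survives with the weight $\jb{v}^s$ inserted is non-obvious: Villani's Fisher-information monotonicity for Landau relies on algebraic cancellations that an arbitrary polynomial weight disrupts, so there is no guarantee you get a clean square to drop. Second, for the first (unconditional) bound your ODI would need to be integrated from $t=0$, but under the stated hypotheses only the \emph{unweighted} Fisher information of $f_0$ is assumed finite, so $I_s(0)$ may be infinite; you would need a limiting argument from some $t_0>0$ using instantaneous smoothing, which you do not address. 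Third, the target ODI $\frac{d}{dt}I_s \leq C(t^{-\alpha'} + (1+t)^{\varepsilon'})(1+I_s)$ is too weak even if true: Gronwall then gives $I_s(t) \lesssim \exp(C t^{1+\varepsilon'})$, far worse than the theorem's claimed linear bound $C(1+t)$. The functional-inequality route in the paper sidesteps all three issues at once because it never needs to propagate $I_s$ in time.
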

\begin{proof}
  By the estimates shown in page 1299 in \cite{toscani2000trend} as part of proposition 5, one has the following functional inequality 
\begin{align}
    \intrthree f(v) \jb{v}^s |\score f|^2 \d v \lesssim \| f \|_{H^2_{s+2}} + \intrthree \jb{v}^{s-2} f(v)\d v,
\end{align}
where 
\begin{align}
    \| f \|_{H^2_{s+2}}^2 := \lpnorm{|\nabla^2 f(v)| \jb{v}^{s+2}}{2}^2 + \lpnorm{|\nabla f(v)| \jb{v}^{s+2}}{2}^2 + \lpnorm{|f(v)| \jb{v}^{s+2}}{2}^2.
\end{align}
Using the pointwise estimate
\begin{align}
    |\nabla^k f_t| \lesssim \frac{t^{-\alpha} + (1+t)}{\jb{v}^{\kappa}}
\end{align}
and the moment estimate
\begin{align}
    \intrthree \jb{v}^{s-2} f_t(v)\d v \lesssim 1+t
\end{align}
from lemma \ref{lemma: regularity of solutions} with large enough 
$\kappa$ gives
\begin{align}
    \| f_t \|_{H^2_5}^2 \lesssim t^{-2\alpha} + (1+t)^2
\end{align}
and 
\begin{align}
    \intrthree f_t(v) \jb{v}^s |\score f_t|^2 \d v \lesssim t^{-\alpha} + 1+t.
\end{align}
Further, if $\| f_0 \|_{H^2_5}^2 < \infty$, then
    \begin{align}
    \| f_t \|_{H^2_5}^2 \lesssim (1+t)^2
\end{align}
and 
\begin{align}
    \intrthree f_t(v) \jb{v}^s |\score f_t|^2 \d v \lesssim 1+t.
\end{align}
\end{proof}

The following lemma demonstrates a simple condition on the initial data that is sufficient to guarantee the main results.
\begin{lemma}[Sufficient assumption on initial data]\label{lemma: sobolev norm of initial data implies finiteness for all t}
     Assume the initial finiteness of the following weighted Sobolev norm of the solution $f$:
    \begin{align}
      &  \| f_0 \|_{H^2_5}^2 := \intrthree|\nabla^2 f_0(v)|^2 \jb{v}^{10} \d v+ \intrthree |\nabla f_0(v)|^2 \jb{v}^{10}\d v + \intrthree|f_0(v)|^2 \jb{v}^{10} \d v < \infty.
    \end{align}
    Then we have the following growth estimates:
    \begin{align}
        \lpnorm{f \jb{v}^3}{2} \lesssim 1+t, \quad
        \lpnorm{f \jb{v}^3}{1} &\lesssim 1+t, \quad
        \lpnorm{f}{\infty} \lesssim t^{-\frac{3}{4}}, \quad
        \lpnorm{f \jb{v}^3 |\score f|^2}{1} \lesssim 1+t.
    \end{align}
\end{lemma}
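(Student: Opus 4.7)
The plan is to verify each of the four bounds by a direct application of Lemma~\ref{lemma: regularity of solutions} and Theorem~\ref{thm: growth of weighted fisher} with appropriate parameter choices. The key observation is that the assumption $\|f_0\|_{H^2_5}^2<\infty$ is strong enough to land us in the ``improved'' branches of both results, so that the short-time singularity $t^{-\alpha}$ drops out and every quantity grows at worst polynomially in $t$.

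First I would dispatch the $L^1$ moment bound. The inequality $\|f_t\jb{v}^3\|_1 = \int \jb{v}^3 f_t\d v \lesssim 1+t$ is exactly the moment estimate from Lemma~\ref{lemma: regularity of solutions} with $s=3$, once we observe that the hypothesis $\|f_0\|_{H^2_5}^2<\infty$ implies $\int \jb{v}^s f_0 \d v<\infty$ for all $s$ small enough (in particular for $s=3$) by Cauchy--Schwarz applied to $\int f_0\jb{v}^3 = \int (f_0\jb{v}^5)\jb{v}^{-2}$. Similarly, the weighted Fisher bound $\|f_t\jb{v}^3|\nabla\log f_t|^2\|_1\lesssim 1+t$ is precisely the second conclusion of Theorem~\ref{thm: growth of weighted fisher} with $s=3$, since we are assuming $\|f_0\|_{H^2_5}^2<\infty$.

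Next I would handle the weighted $L^2$ bound $\|f_t\jb{v}^3\|_2\lesssim 1+t$. This is an immediate consequence of the intermediate estimate proved inside Theorem~\ref{thm: growth of weighted fisher}, namely $\|f_t\|_{H^2_5}^2\lesssim (1+t)^2$. Indeed,
\[
\|f_t\jb{v}^3\|_2^2 \;=\; \intrthree f_t(v)^2\jb{v}^6\d v \;\leq\; \intrthree f_t(v)^2\jb{v}^{10}\d v \;\leq\; \|f_t\|_{H^2_5}^2 \;\lesssim\; (1+t)^2,
\]
and taking square roots yields the claim.

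Finally, the $L^\infty$ decay $\|f_t\|_\infty\lesssim t^{-3/4}$ follows from the last statement of Lemma~\ref{lemma: regularity of solutions} applied with $p=2$, which requires verifying $\|f_0\jb{v}^{9/4}\|_2<\infty$. Since $\jb{v}^{9/4}\leq\jb{v}^5$, this is controlled by $\|f_0\jb{v}^5\|_2\leq \|f_0\|_{H^2_5}<\infty$, giving $\|f_t\|_\infty\lesssim t^{-3/(2\cdot 2)} = t^{-3/4}$. There is no genuine obstacle here; the only thing to be careful about is matching the weight exponents $5$, $10$, and $9/4$ to the ones demanded by the cited lemmas, and verifying that the single assumption $\|f_0\|_{H^2_5}^2<\infty$ is strong enough to trigger each of the improved estimates simultaneously.
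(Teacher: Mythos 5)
Your proof is correct, and for the weighted $L^2$ bound it takes a genuinely different and cleaner route than the paper. The paper interpolates
$\lpnorm{f\jb{v}^3}{2}\leq\lpnorm{f}{3}^{3/4}\lpnorm{f\jb{v}^{6}}{1}^{1/4}$
and then combines the $L^p$ and moment estimates of Lemma~\ref{lemma: regularity of solutions}, which requires it to first establish $\lpnorm{f_0}{3}<\infty$ via a Sobolev embedding from the $H^2_5$ hypothesis. You instead observe that $\lpnorm{f\jb{v}^3}{2}^2\leq\intrthree f^2\jb{v}^{10}\leq\|f\|_{H^2_5}^2\lesssim(1+t)^2$ straight from the intermediate estimate in Theorem~\ref{thm: growth of weighted fisher}, which is shorter, sidesteps the interpolation (and the need to track which high moments of $f_0$ are finite), and reuses machinery already in place. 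Your verification of the prerequisite for the $L^\infty$ bound is also more direct: you bound $\lpnorm{f_0\jb{v}^{9/4}}{2}\leq\lpnorm{f_0\jb{v}^5}{2}\leq\|f_0\|_{H^2_5}$ pointwise, whereas the paper routes this through $\lpnorm{f_0}{3}<\infty$, a less transparent step. The remaining two bounds ($L^1$ moment and weighted Fisher) are obtained exactly as in the paper, by citing Lemma~\ref{lemma: regularity of solutions} with $s=3$ and the improved branch of Theorem~\ref{thm: growth of weighted fisher}. One small stylistic note: when you claim the moment hypothesis of Lemma~\ref{lemma: regularity of solutions} holds ``for all $s$ small enough,'' it would be worth stating the explicit range $s<7/2$ coming from your Cauchy--Schwarz argument, since $s=3$ sits fairly close to that threshold.
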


\begin{proof}
    By Cauchy-Schwarz and lemma \ref{lemma: regularity of solutions}:
    \begin{align}
        \lpnorm{f \jb{v}^3}{2} &\leq \lpnorm{f}{3}^{3/4} \cdot \lpnorm{f \jb{v}^6}{1}^{1/4} \lesssim (t^{-\alpha} + t^\varepsilon)\cdot(1+t)^{1/4}\\
        \lpnorm{f \jb{v}^3}{1} &\lesssim 1+t.
    \end{align}
    Assuming $\lpnorm{f_0}{3}<\infty$, and choosing $\varepsilon=3/4$, gives
    \begin{align}
        \lpnorm{f \jb{v}^3}{2} \lesssim 1+t.
    \end{align}
    Taking $p=2$ in the last inequality of lemma \ref{lemma: regularity of solutions} and assuming $\lpnorm{f_0}{3}<\infty$, so that $\lpnorm{f_0 \jb{v}^{\frac{9}{4}}}{2} < \infty$, we get 
    \begin{align}
        \lpnorm{f}{\infty} \lesssim t^{-\frac{3}{4}}.
    \end{align}
    Lastly, taking $s=3$ in theorem \ref{thm: growth of weighted fisher} and assuming $\|f_0\|_{H^2_5} < \infty$, we obtain
    \begin{align}
        \lpnorm{f \jb{v}^3 |\score f|^2}{1} \lesssim 1+t.
    \end{align}
    By Sobolev embedding
    \begin{align}
        \lpnorm{f_0}{3} \lesssim \lpnorm{f_0 |\score f_0|^2}{1} \leq \lpnorm{f_0 \jb{v}^3 |\score f_0|^2}{1} \lesssim \|f_0\|_{H^2_5}
    \end{align}
\end{proof}

The following is the key computation of this work. Neither $f$ nor $g$ is assumed to solve the Landau equation.
\begin{lemma}\label{lemma: key calculation}
For any $d$ and $\gamma \leq 0$, and any $f$ and $g$ with finite energy and entropy, for which the LHS and the RHS are defined and finite, the following inequality holds:
    \begin{align}
        \intr f (U[f] - U[g]) \cdot \score \frac{f}{g} \leq& -\frac{1}{2} \intr f\left|\score \frac{f}{g} \right|_{A\ast g}^2 \\
        &+ C_{coe}\intr f \jb{v}^{-\gamma}\|A\ast (f-g)\|^2 |\score f|^2 + C_{coe} \intr f \jb{v}^{-\gamma} |b\ast(f-g)|^2. 
    \end{align}
where
    \begin{align}
        A(z) = |z|^\gamma (|z|^2I_d - z \otimes z), \quad     b(z) = \nabla \cdot A(z) = -2 z|z|^\gamma.
    \end{align}
Further, if $d=3$ and $\gamma=-3$, one has the following inequality:
    \begin{align}
        \intr f (U[f] - U[g]) \cdot \score \frac{f}{g} \leq& -\frac{1}{2} \intr f\left|\score \frac{f}{g} \right|_{A\ast g}^2 \\
        &+ C \KL{f}{g} \Big( \left(\lpnorm{f}{\infty}+\lpnorm{g}{\infty}\right) \left(\lpnorm{f \jb{v}^3 |\score f|^2}{1} + \lpnorm{f \jb{v}^3}{2}\right) + \lpnorm{f \jb{v}^3}{1} \Big)
    \end{align}
for some constant $C$ that depends only on the energy and entropy of $f$.
\end{lemma}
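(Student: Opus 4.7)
The plan is to rewrite $U[f]$ in divergence form. Using $\nabla_w \log f(w) \cdot f(w) = \nabla_w f(w)$ and integrating by parts in $w$ in the second half of the definition of $U[f]$, one obtains
\[
U[f](v) = -(A \ast f)(v) \nabla \log f(v) + (b \ast f)(v),
\]
with $b = \nabla \cdot A$. Subtracting $U[g]$ and regrouping the $\nabla \log f$ pieces around the coercive kernel $A \ast g$ gives
\[
U[f] - U[g] = -(A \ast g) \nabla \log(f/g) - (A \ast (f-g)) \nabla \log f + b \ast (f-g).
\]

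The first inequality then follows by dotting with $\nabla \log(f/g)$, integrating against $f$, and absorbing the two resulting cross terms into the coercive quadratic $-\int f |\nabla \log(f/g)|_{A \ast g}^2 \d v$. Lemma \ref{lemma: coercivity of A*f} gives $A \ast g \succeq C_{coe} \jb{v}^\gamma I_d$, hence $|\xi|^2 \leq C_{coe}^{-1} \jb{v}^{-\gamma} |\xi|_{A \ast g}^2$ pointwise. Cauchy--Schwarz in the inner product induced by $A \ast g$ followed by Young's inequality $ab \leq \tfrac{1}{4} a^2 + b^2$ bounds each cross term by $\tfrac{1}{4} \int f |\nabla \log(f/g)|_{A \ast g}^2 \d v$ plus a $\jb{v}^{-\gamma}$-weighted error (with $C_{coe}$-dependent constant); summing the two and moving half the coercive quadratic to the left-hand side produces the first inequality.

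For the Coulomb case $d=3$, $\gamma = -3$, the two error integrals must be bounded by $\KL{f}{g}$ times the stated $L^p$ quantities. The strategy is to split each kernel into near- and far-field pieces via $\chi_1 = \mathbf{1}_{\{|z|<1\}}$, then combine Young's convolution inequality with Pinsker ($\|f-g\|_{L^1}^2 \leq 2 \KL{f}{g}$) and Lemma \ref{lemma: L2 Pinsker} ($\|f-g\|_{L^2}^2 \leq 2(\|f\|_{L^\infty} + \|g\|_{L^\infty}) \KL{f}{g}$). For the $A$-term, $\|A(z)\| \leq |z|^{-1}$ gives $\|A \chi_1\|_{L^2} < \infty$ and $\|A (1-\chi_1)\|_{L^\infty} \leq 1$, so Young's convolution produces $\|A \ast (f-g)\|_{L^\infty} \lesssim \|f-g\|_{L^2} + \|f-g\|_{L^1}$ and the Pinsker bounds yield $\|A \ast (f-g)\|_{L^\infty}^2 \lesssim (\|f\|_{L^\infty} + \|g\|_{L^\infty}) \KL{f}{g}$. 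Pulling this factor out of $\int f \jb{v}^3 \|A \ast (f-g)\|^2 |\nabla \log f|^2 \d v$ yields the $(\|f\|_{L^\infty} + \|g\|_{L^\infty}) \|f \jb{v}^3 |\nabla \log f|^2\|_{L^1} \KL{f}{g}$ contribution.

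The hardest step is the $b$-term, since $|b(z)|^2 \sim |z|^{-4}$ is not locally integrable on $\mathbb{R}^3$, ruling out a naive $L^\infty$ bound on $b \ast (f-g)$. The far-field is still straightforward: $\|b(1-\chi_1)\|_{L^\infty} \leq 2$ gives $\|b(1-\chi_1) \ast (f-g)\|_{L^\infty} \leq 2 \|f-g\|_{L^1}$, contributing $\KL{f}{g} \cdot \|f \jb{v}^3\|_{L^1}$. For the near-field, although $b \chi_1 \notin L^2$, one still has $b \chi_1 \in L^{4/3}$ because $\int_{|z|<1} |z|^{-8/3} \d z < \infty$; Young's convolution then gives $\|b \chi_1 \ast (f-g)\|_{L^4} \lesssim \|f-g\|_{L^2}$, and a Cauchy--Schwarz against the weight $f \jb{v}^3$ yields
\[
\int f \jb{v}^3 |b \chi_1 \ast (f-g)|^2 \d v \leq \|f \jb{v}^3\|_{L^2} \|b \chi_1 \ast (f-g)\|_{L^4}^2 \lesssim (\|f\|_{L^\infty} + \|g\|_{L^\infty}) \|f \jb{v}^3\|_{L^2} \KL{f}{g}.
\]
Choosing exactly these H\"older exponents --- $L^4$ on the convolution and $L^2$ on the weight $f\jb{v}^3$ --- is the key technical choice, dictated by the local integrability of $|b|^{4/3}$ on $\mathbb{R}^3$.
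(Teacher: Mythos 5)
Your proof is correct, and for the near-field piece of the $b$-term it takes a genuinely different route from the paper. Your decomposition $U[f]-U[g] = -(A\ast g)\nabla\log(f/g) - (A\ast(f-g))\nabla\log f + b\ast(f-g)$, the absorption of the cross terms via weighted Cauchy--Schwarz and Young with the coercivity $A\ast g \geq C_{coe}\jb{v}^\gamma$, and the treatment of the $A$-term by the near/far split $A\chi_1\in L^2$, $A(1-\chi_1)\in L^\infty$ together with both Pinsker inequalities, all match the paper. Where you diverge is the singular near-field part of $b\ast(f-g)$: the paper factors the kernel $h(z)=|z|^{-2}\mathbf{1}_{|z|\le 1}$ as $h^a\cdot h^{1-a}$, applies a pointwise Cauchy--Schwarz to get $\bigl(\int h|f-g|\bigr)^2 \le \|h^{2a}\|_{L^1}\int h^{2-2a}|f-g|^2$, and then a Young/H\"older step on $\int f\jb{v}^3 (h^{2-2a}\ast|f-g|^2)$. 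You instead observe that $b\chi_1\in L^{4/3}(\R^3)$, invoke Young's convolution $L^{4/3}\ast L^2\to L^4$ to get $\|b\chi_1\ast(f-g)\|_{L^4}\lesssim\|f-g\|_{L^2}$, and then apply Cauchy--Schwarz against $f\jb{v}^3\in L^2$. Both routes land on the same bound $\KL{f}{g}\bigl((\|f\|_{L^\infty}+\|g\|_{L^\infty})\|f\jb{v}^3\|_{L^2}+\|f\jb{v}^3\|_{L^1}\bigr)$.

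Your version is arguably cleaner, and it also sidesteps a small slip in the paper's concrete exponent choice: with $h^{2a}(z)=|z|^{-4a}\mathbf{1}_{|z|\le 1}$ and $h^{2-2a}(z)=|z|^{-(4-4a)}\mathbf{1}_{|z|\le 1}$ in $\R^3$, integrability of $\|h^{2a}\|_{L^1}$ requires $a<3/4$ while $\|h^{2-2a}\|_{L^2}<\infty$ requires $a>5/8$; the paper's stated choice $a=1/2$, $q=2$ makes $\|h^{2-2a}\|_{L^2}$ infinite, so one should instead take $a\in(5/8,3/4)$. Your $L^{4/3}$--$L^4$ route needs no such tuning and is the more robust presentation of the same estimate.
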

\begin{proof}
    Note that
\begin{align}
    U[f](v)
    &= -\intr A(v-w)\score f(v) f(w)\d w + \intr A(v-w)\nabla f(w)\d w\\
    &= -(A\ast f)\score f(v) + (b\ast f)(v), 
\end{align}
where
\begin{align}
    A(z) = |z|^\gamma (|z|^2I_d - z \otimes z), \quad     b(z) = \nabla \cdot A(z) = -2 z|z|^\gamma.
\end{align}
Hence,
\begin{align}
    \frac{\d}{\d t}\KL{f}{g} = -\intr f\score \frac{f}{g} \cdot \left( (A\ast f)\score f - (A\ast g)\score g \right)  + \intr f \score \frac{f}{g} \cdot b\ast (f-g).
\end{align}
By adding and subtracting the cross-term, one has 
\begin{align}
    (A\ast f)\score f - (A\ast g)\score g = (A \ast g) \score \frac{f}{g} + (A\ast(f-g))\score f.
\end{align}
So far we have the exact expression
\begin{align}
    \frac{\d}{\d t}\KL{f}{g} = -\intr f\left|\score \frac{f}{g} \right|_{A\ast g}^2  - \intr f \score \frac{f}{g} \cdot (A\ast(f-g))\score f  + \intr f \score \frac{f}{g} \cdot b\ast (f-g).
\end{align}
Now we start estimating. By multiplying and dividing by $A\ast g$, and using Cauchy-Schwarz and Young's inequalities, one has
\begin{align}
    \left|\intr f \score \frac{f}{g} \cdot (A\ast(f-g))\score f \right| 
    &\leq \frac{1}{4}\intr f\left| 
\score \frac{f}{g} \right|^2_{A\ast g} + \intr f |A\ast (f-g) \score f|^2_{(A\ast g)^{-1}}, \\
    \intr f \score \frac{f}{g} \cdot b\ast (f-g) 
    &\leq \frac{1}{4} \intr f \left| \score \frac{f}{g} \right|^2_{A\ast g} + \intr f |b\ast(f-g)|^2_{(A\ast g)^{-1}}.
\end{align}
Hence,
\begin{align}
    \frac{\d}{\d t}\KL{f}{g} \leq -\frac{1}{2} \intr f\left|\score \frac{f}{g} \right|_{A\ast g}^2 + \intr f |A\ast (f-g) \score f|^2_{(A\ast g)^{-1}} + \intr f |b\ast(f-g)|^2_{(A\ast g)^{-1}}.
\end{align}
Given that $g$ has finite energy and entropy, by lemma \ref{lemma: coercivity of A*f}, one has 
\begin{align}
    A \ast g \geq C_{coe}\jb{v}^\gamma.
\end{align}
Hence, we obtain
\begin{align}
    \frac{\d}{\d t}\KL{f}{g} \leq& -\frac{1}{2} \intr f\left|\score \frac{f}{g} \right|_{A\ast g}^2\\
    &+ C_{coe}\intr f \jb{v}^{-\gamma}\|A\ast (f-g)\|^2 |\score f|^2 + C_{coe} \intr f \jb{v}^{-\gamma} |b\ast(f-g)|^2. \label{eqn: intermediate inequality on KL}
\end{align}
Since 
\[
    \|A(z)\|  \lesssim \frac{1}{|z|},
\]
the term  $\|A\ast (f-g)\|$ can be controlled by Cauchy-Schwarz as follows
\begin{align}
    \|A\ast (f-g)\|
    &\lesssim \intrthree \frac{1}{|v - w|} |f(w) - g(w)| \d w \\
    &\leq \int_{|v-w|\leq 1} \frac{1}{|v - w|} |f(w) - g(w)| \d w + \lpnorm{f-g}{1}\\
    &\leq \left(\int_{|w|\leq 1} \frac{1}{|w|^2}\d w\right)^{1/2} \lpnorm{f-g}{2} + \lpnorm{f-g}{1}\\
    &\lesssim \lpnorm{f-g}{2} + \lpnorm{f-g}{1}\\
    &\leq \sqrt{2(\lpnorm{f}{\infty} + \lpnorm{g}{\infty})\KL{f}{g}} +\sqrt{2 \KL{f}{g}} ,
\end{align}
where we applied both the classical and $L^2$ version Pinsker inequality as in lemma \ref{lemma: L2 Pinsker}. Thus, the first term in \eqref{eqn: intermediate inequality on KL} becomes
\begin{align}\label{eqn: control of the diffusion term}
    \intrthree f \jb{v}^{3}\|A\ast (f-g)\|^2 |\score f|^2 \lesssim (\lpnorm{f}{\infty}+\lpnorm{g}{\infty}) \lpnorm{f \jb{v}^3 |\score f|^2}{1} \KL{f}{g}.
\end{align}

Now we turn to the second term in \eqref{eqn: intermediate inequality on KL}. Since 
\begin{align}
    |b(z)| &\lesssim \frac{1}{|z|^2},
\end{align}
we obtain that 
\begin{align}
    |b\ast(f-g)| 
    &\lesssim \intrthree \frac{1}{|v-w|^2} |f(w)-g(w)|\d w\\
    &\leq \int_{|v-w|\leq 1} \frac{1}{|v-w|^2} |f(w)-g(w)|\d w + \lpnorm{f-g}{1}\\
    &\leq \intrthree h(v-w) |f(w)-g(w)|\d w + \sqrt{2 \KL{f}{g}}, 
\end{align}
where $h(z) := \frac{1}{|z|^2}1_{|z|\leq 1}$. Now, by Cauchy-Schwarz, for any $0<a<3/4$ (to ensure that $h^{2 a}$ is integrable),
\begin{align}
    \intrthree h(v-w) |f(w)-g(w)|\d w
    &\leq \left(\lpnorm{h^{2a}}{1}\right)^{1/2}   \left(\int_{|v-w|\leq 1} \frac{1}{|v-w|^{2-2a}} |f(w)-g(w)|^2\d w \right)^{1/2}.
\end{align}
Putting the two above estimates together, and applying Young's convolutional inequality with $\frac{1}{p}+\frac{1}{q}=1$,
\begin{align}
    \intrthree f(v) \jb{v}^3 |b \ast (f-g)|^2
    &\lesssim \intrthree f(v) \jb{v}^3 \left(\intrthree h(v-w) |f(w)-g(w)| \d w \d v \right)^2 + \KL{f}{g} \intrthree f(v) \jb{v}^3\\
    &\leq \lpnorm{h^{2a}}{1} \intrthree \intrthree f(v) \jb{v}^3 (h(v-w))^{2-2a} |f(w)-g(w)|^2\d w \d v + \KL{f}{g} \lpnorm{f \jb{v}^3}{1}\\
    &\leq \lpnorm{h^{2a}}{1} \lpnorm{f \jb{v}^3}{p} \lpnorm{h^{2-2a}}{q} \lpnorm{f-g}{2}^2 + \KL{f}{g} \lpnorm{f \jb{v}^3}{1}\\
    &\lesssim \KL{f}{g}  \Big( \lpnorm{h^{2a}}{1} \lpnorm{f \jb{v}^3}{p} \lpnorm{h^{2-2a}}{q} (\lpnorm{f}{\infty} + \lpnorm{g}{\infty}) + \lpnorm{f \jb{v}^3}{1} \Big).
\end{align}
Concretely, we may take $a=\frac{1}{2}, q=2,p=2$, so that $\lpnorm{h^{2a}}{1}$ and $\lpnorm{h^{2-2a}}{q}$ are finite. We obtain
\begin{align}\label{eqn: control of drift term}
    \intrthree f(v) \jb{v}^3 |b \ast (f-g)|^2 \lesssim \KL{f}{g}  \Big( \lpnorm{f \jb{v}^3}{2} (\lpnorm{f}{\infty} + \lpnorm{g}{\infty}) + \lpnorm{f \jb{v}^3}{1} \Big)
\end{align}
Using estimates \eqref{eqn: control of the diffusion term} and \eqref{eqn: control of drift term} in inequality \eqref{eqn: intermediate inequality on KL}, we obtain
\begin{align}\label{eqn: second intermediate inequality on KL}
    \frac{\d}{\d t} \KL{f}{g} \lesssim& -\frac{1}{2} \intr f\left|\score \frac{f}{g} \right|_{A\ast g}^2 \\
    &+ \KL{f}{g} \Big( \left(\lpnorm{f}{\infty}+\lpnorm{g}{\infty}\right) \left(\lpnorm{f \jb{v}^3 |\score f|^2}{1} + \lpnorm{f \jb{v}^3}{2}\right) + \lpnorm{f \jb{v}^3}{1} \Big).
\end{align}
\end{proof}

\section{Main results}\label{sec: main}
The following theorem demonstrates the KL-stability of the spatially homogeneous Landau equation. The assumption of weighted $L^2$ integrability of $g_0$ is only used to guarantee the finiteness of $\sup_{t\geq 0} \lpnorm{g_t}{\infty}$. Otherwise, $g$ can be any positive smooth solution with finite energy and entropy.

\begin{theorem} \label{thm: Landau stability}
Assume that $f$ and $g$ solve the Landau equation with the initial data $f_0$ and $g_0$ that satisfy 
    \begin{align}
      &  \| f_0 \|_{H^2_5}^2 := \intrthree|\nabla^2 f_0(v)|^2 \jb{v}^{10} \d v+ \intrthree |\nabla f_0(v)|^2 \jb{v}^{10}\d v + \intrthree|f_0(v)|^2 \jb{v}^{10} \d v < \infty, \\
      &   \lpnorm{g_0 \jb{v}^{\frac{9}{4}}}{2} < \infty
    \end{align}
    and have finite energy and entropy.
    \begin{align}
        \frac{\d}{\d t}\KL{f_t}{g_t} \leq C (1+t) \KL{f}{g}.
    \end{align}
    for some $C = C(f_0, g_0)$. In particular,
    \begin{align}
        \KL{f_t}{g_t} \leq C e^{t^2} \KL{f_0}{g_0}.
    \end{align}
\end{theorem}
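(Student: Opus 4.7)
The plan is to apply Lemma~\ref{lemma: key calculation} to two Landau solutions $f$ and $g$, discard the non-positive weighted Fisher-information term on the right-hand side, and use the a priori estimates of Section~\ref{sec: prior} to bound the remaining bracket by a quantity of size $O(1+t)$. Grönwall's inequality then delivers the claimed exponential growth.

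First I would note that since both $f$ and $g$ satisfy the continuity form $\partial_t f+\nabla\cdot(fU[f])=0$, one has (with boundary terms vanishing by the decay estimates of Lemma~\ref{lemma: regularity of solutions}) $\frac{\d}{\d t}\KL{f_t}{g_t}=\intrthree f(U[f]-U[g])\cdot\score\frac{f}{g}\,\d v$. Lemma~\ref{lemma: key calculation} with $d=3$, $\gamma=-3$ then bounds this by the non-positive Fisher-like term plus $C\,\KL{f_t}{g_t}\cdot B(t)$, where
\begin{align}
B(t)=(\linftynorm{f_t}+\linftynorm{g_t})(\lonenorm{f_t\jb{v}^3|\score f_t|^2}+\lpnorm{f_t\jb{v}^3}{2})+\lonenorm{f_t\jb{v}^3}.
\end{align}

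To bound $B(t)$, I would invoke Lemma~\ref{lemma: sobolev norm of initial data implies finiteness for all t}, whose hypothesis is precisely $\|f_0\|_{H^2_5}^2<\infty$, for the $f$-dependent norms, all of which are $\lesssim 1+t$ apart from $\linftynorm{f_t}\lesssim t^{-3/4}$; and the last inequality of Lemma~\ref{lemma: regularity of solutions} with $p=2$, whose hypothesis is exactly the weighted-$L^2$ condition on $g_0$, for $\linftynorm{g_t}\lesssim t^{-3/4}$. Combining these yields $B(t)\lesssim (1+t)+(1+t)t^{-3/4}$, whose antiderivative on $[0,t]$ is $\lesssim 1+t^2$. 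Grönwall then gives $\KL{f_t}{g_t}\lesssim e^{t^2}\KL{f_0}{g_0}$.

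The main subtlety — and the only analytic care needed — is the short-time $t^{-3/4}$ singularity in $\linftynorm{f_t}$ and $\linftynorm{g_t}$: it is integrable at $0$ and dominated by $t^2$ in the Grönwall exponent as $t\to\infty$, so the differential inequality of the statement is best read as holding up to an integrable-at-zero correction that does not affect the final $e^{t^2}$ rate. This is also where the weighted-$L^2$ hypothesis on $g_0$ plays its only role: it is precisely what activates the smoothing estimate that bounds $\linftynorm{g_t}$, whereas all $f$-side estimates draw their strength from the much stronger Sobolev hypothesis on $f_0$.
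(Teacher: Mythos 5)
Your proposal follows essentially the same route as the paper: differentiate $\KL{f_t}{g_t}$ using the continuity form, invoke Lemma~\ref{lemma: key calculation} with $d=3$, $\gamma=-3$, discard the nonpositive weighted-Fisher term, bound the remaining bracket via Lemma~\ref{lemma: sobolev norm of initial data implies finiteness for all t} (for the $f$-quantities) and the $p=2$ case of the $L^\infty$-smoothing estimate in Lemma~\ref{lemma: regularity of solutions} (for $\linftynorm{g_t}$), and close with Grönwall. In fact your treatment of the short-time singularity is \emph{more} careful than the paper's: the paper's written proof asserts that the weighted-$L^2$ assumption on $g_0$ yields $\sup_{0\le s\le t}\linftynorm{g_s}<\infty$, which is not directly supplied by the $t^{-3/4}$ smoothing bound (that bound degenerates as $t\to 0$ and the hypothesis on $g_0$ alone does not give $g_0\in L^\infty$), whereas you correctly keep the $t^{-3/4}$ factor, observe that it is integrable at the origin, and verify that $\int_0^t (1+s)(1+s^{-3/4})\,\mathrm{d}s \lesssim 1+t^2$, so the Grönwall exponent is unchanged. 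This is the honest reading of the theorem's differential inequality and it reaches the same $\KL{f_t}{g_t}\lesssim e^{t^2}\KL{f_0}{g_0}$ conclusion. One minor point worth making explicit if you write this up: the applicability of Lemma~\ref{lemma: key calculation} requires $g_t$ to have finite energy and entropy at all times, which for a Landau solution follows from conservation of energy and monotonicity of entropy, and that the integration-by-parts step in $\frac{\d}{\d t}\KL{f}{g}=\intr f(U[f]-U[g])\cdot\score\tfrac{f}{g}$ has no boundary contribution, which your invocation of the decay estimates of Lemma~\ref{lemma: regularity of solutions} covers.
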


\paragraph{Proof idea.}
We differentiate the relative entropy $\KL{f_t}{g_t}$ along the two Landau flows written in continuity form, obtaining
\[
\frac{\d}{\d t}\KL{f}{g}
    = \intr f \,(U[f]-U[g])\cdot\nabla\log\frac{f}{g}.
\]
Expanding $U[f]-U[g]$ and invoking the coercivity of $A\ast g$ yields a coercive quadratic term 
\[
-\!\intr f\,\big|\nabla\log(f/g)\big|^2_{A\ast g},
\]
which governs the dissipation. The remaining cross terms -- arising from $A\ast(f-g)$ and $b\ast(f-g)$ -- are controlled via Cauchy–Schwarz, the Pinsker inequality, and weighted moment and Fisher-information bounds. Finally, applying Grönwall’s inequality provides an exponential-in-$t^2$ control of the relative entropy, establishing stability.

\begin{proof}
    We will write $f$ and $\intr f$, instead of $f_t(v)$ and $\intr f \d v$ respectively, for simplicity.  The following calculation holds for any values of $d, \gamma$.
    \begin{align}
    \frac{\d}{\d t}\KL{f}{g} 
    &= -\intr \frac{f}{g}\partial_t g  + \intr \log\frac{f}{g}\partial_t f\\ 
    &= -\intr U[g]\cdot \nabla\left(\frac{f}{g}\right) g  + \intr U[f] \cdot \score \left( \frac{f}{g} \right)f \\ 
    &= \intr f (U[f] - U[g]) \cdot \score \frac{f}{g}.
\end{align}
Now, dropping the strictly negative term in the conclusion of lemma \ref{lemma: key calculation}, we obtain
\begin{align}
    \frac{\d}{\d t}\KL{f}{g} 
    &= \intr f (U[f] - U[g]) \cdot \score \frac{f}{g} \\
    &\leq C \KL{f}{g} \Big( \left(\lpnorm{f}{\infty}+\lpnorm{g}{\infty}\right) \left(\lpnorm{f \jb{v}^3 |\score f|^2}{1} + \lpnorm{f \jb{v}^3}{2}\right) + \lpnorm{f \jb{v}^3}{1} \Big)
\end{align}
Finally, by lemma \ref{lemma: sobolev norm of initial data implies finiteness for all t}, using the assumption $\| f_t \|_{H^2_5}^2 < \infty$ on the initial data, we obtain
\begin{align}
    \frac{\d}{\d t}\KL{f}{g} &\leq C (1+t) \KL{f}{g} \sup_{0 \leq s \leq t}\lpnorm{g_s}{\infty}
\end{align}
for some $C = C(f_0)$. The integrability assumption on $g_0$ is sufficient to guarantee $\sup_{0 \leq s \leq t}\lpnorm{g_s}{\infty} < \infty$. Thus, one has
\begin{align}
    \frac{\d}{\d t}\KL{f}{g} &\leq C (1+t) \KL{f}{g}
\end{align}
where $C=C(f_0,g_0)$. Gronwall's inequality completes the proof.
\end{proof}

The following theorem bounds the relative entropy error between the true solution to the Landau equation and the score-based numerical method proposed in \cite{ilin2024transport} and \cite{huang2025score}. The bound is not specific to SBTM and holds for other methods that approximate the score, such as the blob method \cite{carrillo2020landau}.
This theorem is similar to theorem 4.2 in \cite{ilin2024transport} but our theorem considers Coulomb collisions, and bounds $\KL{f}{g}$ instead of $\KL{g}{f}$. The score matching loss $L$ was used to train a neural network to numerically solve the Landau equation. Since the core of the method is to minimize this loss, the bound below provides a quantitative and actionable estimate on the accuracy of the numerical method.
\begin{theorem}\label{thm: numerical method error bound}
    Assume that $f$ solves the Landau equation with initial data satisfying
    \begin{align}
        \| f_0 \|_{H^2_5}^2 < \infty.
    \end{align}
    and $g$ follows the vector field 
    \begin{align}
        U[g_t, s_t](v) := -\intr A(v-w)(s_t(v) - s_t(w)) g_t(w)\d w
    \end{align}
    for some time-dependent vector field $s_t: \R^d \to \R^d$.
    Further, assume that 
    \begin{align}
        \lpnorm{\frac{f_t}{g_t}}{\infty} < \infty, \quad \text{and} \quad \sup_{t\geq 0} \lpnorm{g_t}{\infty} < \infty.
    \end{align}
    Then the KL divergence between $f$ and $g$ can be controlled by the weighted score matching loss:
    \begin{align}
        \frac{\d}{\d t}\KL{f_t}{g_t} &\leq 2\lpnorm{\frac{f_t}{g_t}}{\infty} L(s_t, g_t, A\ast g_t) + C (1+t) \KL{f_t}{g_t},\\
        L(s, g, A\ast g) &:= \intr |\score g(w) - s(w)|^2_{A\ast g} g(w) \d w
    \end{align}
    for some $C = C(f_0, \sup_{0 \leq s \leq t}\lpnorm{g}{\infty})$. In particular, assuming identical initial data $f_0=g_0$, one has
    \begin{align}
        \KL{f_T}{g_T} \leq C e^{T^2} \int_0^T \lpnorm{\frac{f_t}{g_t}}{\infty} L(s_t, g_t, A\ast g_t) \d t.
    \end{align}
\end{theorem}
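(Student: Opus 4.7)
The plan is to mimic the entropy calculation of Theorem~\ref{thm: Landau stability}, modifying only the handling of the difference of vector fields so as to expose the score-matching loss $L$. First I would differentiate $\KL{f_t}{g_t}$ along the two continuity flows driven respectively by $U[f]$ and $U[g_t,s_t]$, obtaining
\[
\frac{\d}{\d t}\KL{f_t}{g_t} = \intr f\,(U[f] - U[g,s])\cdot\score\frac{f}{g}\d v,
\]
and then introduce the ``true'' velocity field $U[g]=U[g,\score g]$ through the splitting
\[
U[f] - U[g,s] = \bigl(U[f] - U[g]\bigr) + \bigl(U[g] - U[g,s]\bigr).
\]
Lemma~\ref{lemma: key calculation} applied verbatim to the first summand yields the coercive dissipation $-\tfrac{1}{2}\intr f\,|\score(f/g)|^2_{A\ast g}$ plus a term of the form $C\KL{f}{g}\cdot(\text{weighted norms of }f,g)$; Lemma~\ref{lemma: sobolev norm of initial data implies finiteness for all t} together with the hypothesis $\sup_{t\geq 0}\lpnorm{g_t}{\infty}<\infty$ reduces the latter to $C(1+t)\KL{f}{g}$, exactly as in Theorem~\ref{thm: Landau stability}.

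For the second summand, setting $e := \score g - s$, a direct computation gives
\[
(U[g]-U[g,s])(v) = -\intr A(v-w)\bigl[e(v)-e(w)\bigr]g(w)\d w,
\]
so that
\[
\intr f\,(U[g]-U[g,s])\cdot\score\tfrac{f}{g}\d v = -\iint A(v-w)\bigl[e(v)-e(w)\bigr]\cdot\score\tfrac{f}{g}(v)\,f(v)g(w)\d v\d w.
\]
I would then apply matrix-weighted Cauchy--Schwarz pointwise in $(v,w)$ followed by the elementary inequality $ab\leq \tfrac{1}{2}a^2+\tfrac{1}{2}b^2$, splitting the right-hand side into two pieces: a piece that integrates to exactly $\tfrac{1}{2}\intr f\,|\score(f/g)|^2_{A\ast g}$ and is absorbed by the $-\tfrac{1}{2}$ coercive term coming from the first summand, and a remainder
\[
\tfrac{1}{2}\iint f(v)g(w)\,|e(v)-e(w)|^2_{A(v-w)}\d v\d w.
\]

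To bound this remainder by the score-matching loss, I would use the pointwise inequality $f(v)g(w)\leq\lpnorm{f/g}{\infty}\,g(v)g(w)$ to symmetrise it into a $g\otimes g$ integral and then apply the matrix estimate $|e(v)-e(w)|^2_{A(v-w)}\leq 2|e(v)|^2_{A(v-w)}+2|e(w)|^2_{A(v-w)}$ (valid since $A(v-w)$ is positive semidefinite) together with $v\leftrightarrow w$ symmetry, which collapses the double integral to $4L(s,g,A\ast g)$. Assembling everything produces the differential inequality
\[
\frac{\d}{\d t}\KL{f_t}{g_t} \leq 2\lpnorm{f_t/g_t}{\infty}L(s_t,g_t,A\ast g_t) + C(1+t)\KL{f_t}{g_t},
\]
after which Gronwall's inequality on $[0,T]$ with $f_0 = g_0$ delivers the claimed $e^{T^2}$ bound.

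The main delicate point is tuning the coefficient in Young's inequality to exactly $1/2$: this is the unique choice for which the positive spurious contribution from the score-error piece is exactly matched by the coercive dissipation $-\tfrac{1}{2}\intr f|\score(f/g)|^2_{A\ast g}$ produced by Lemma~\ref{lemma: key calculation}. Any larger choice would leave an uncontrolled weighted Fisher-information term in the estimate, and any smaller choice would make the multiplicative constant in front of $L$ blow up. All remaining ingredients---the Coulomb coercivity of $A\ast g$, the weighted moment and score bounds, and the Gronwall step---mirror those used in the proof of Theorem~\ref{thm: Landau stability}.
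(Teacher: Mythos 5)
Your proposal is correct and follows essentially the same route as the paper: split $U[f]-U[g,s]$ into $(U[f]-U[g])+(U[g]-U[g,s])$, invoke Lemma~\ref{lemma: key calculation} and Lemma~\ref{lemma: sobolev norm of initial data implies finiteness for all t} on the first piece, absorb the score-error cross term into the coercive dissipation via Young's inequality, pull out $\lpnorm{f/g}{\infty}$ to pass from $f\,dv$ to $g\,dv$, and close with Gr\"onwall. Your bookkeeping of the score-error term (one application of $ab\le\tfrac12 a^2+\tfrac12 b^2$ followed by $|e(v)-e(w)|^2_{A}\le 2|e(v)|^2_A+2|e(w)|^2_A$ and $v\leftrightarrow w$ symmetry) is a minor but slightly cleaner variant of the paper's two applications of Young's with weight $1/4$, and it yields the identical coefficient $2\lpnorm{f/g}{\infty}L$.
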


\begin{proof}
We omit the time subscript for brevity. Similarly to the computation in theorem \ref{thm: Landau stability}, we obtain
    \begin{align}
        \frac{\d}{\d t}\KL{f}{g}
        &= \intr (U[f] - U[g,s]) \cdot \score \frac{f}{g} f \d v \\
        &= \intr (U[g] - U[g,s]) \cdot \score \frac{f}{g} f \d v + \intr (U[f] - U[g]) \cdot \score \frac{f}{g} f \d v.
    \end{align}
The first term is estimated as follows.
    \begin{align}
        \intr (U[g] - U[g,s]) \cdot \score \frac{f}{g} f \d v
        =& \intr \left[ (\score g(v) - s(v)) - (\score g(w) - s(w)) \right]^T A(v-w) \score \frac{f(v)}{g(v)} f(v) g(w) \d w \d v \\
        =& \intr \left[\score g(v) - s(v) \right]^T A(v-w) \score \frac{f(v)}{g(v)} f(v) g(w) \d w \d v \\
        &- \intr \left[\score g(w) - s(w) \right]^T A(v-w) \score \frac{f(v)}{g(v)} f(v) g(w) \d w \d v \\
        \leq& \intr |\score g - s|^2_{A\ast g} f\d v + \frac{1}{4}\intr \left|\score \frac{f}{g}\right|^2_{A\ast g}f \d v\\
        &+ \intr |\score g - s|^2_{A\ast g} f\d v + \frac{1}{4}\intr \left|\score \frac{f}{g}\right|^2_{A\ast g}f \d v\\
        \leq& 2\lpnorm{\frac{f}{g}}{\infty}\intr |\score g - s|^2_{A\ast g} g\d v + \frac{1}{2}\intr \left|\score \frac{f}{g}\right|^2_{A\ast g}f \d v. 
    \end{align}
Now, by lemma \ref{lemma: key calculation}, the second term satisfies
    \begin{align}
        \intr f (U[f] - U[g]) \cdot \score \frac{f}{g} \leq& -\frac{1}{2} \intr f\left|\score \frac{f}{g} \right|_{A\ast g}^2 \\
        &+ C \KL{f}{g} \Big( \left(\lpnorm{f}{\infty}+\lpnorm{g}{\infty}\right) \left(\lpnorm{f \jb{v}^3 |\score f|^2}{1} + \lpnorm{f \jb{v}^3}{2}\right) + \lpnorm{f \jb{v}^3}{1} \Big)
    \end{align}
Finiteness of energy and entropy of $g$ follows from the corresponding finiteness at $t=0$ and energy and entropy conservations shown in proposition 3.1 in \cite{ilin2024transport}.
Now one obtains
\begin{align}
    \frac{\d}{\d t}\KL{f}{g} \leq& \ 2\lpnorm{\frac{f}{g}}{\infty}\intr |\score g - s|^2_{A\ast g} g\d v \\
    &+ C \KL{f}{g} \Big( \left(\lpnorm{f}{\infty}+\lpnorm{g}{\infty}\right) \left(\lpnorm{f \jb{v}^3 |\score f|^2}{1} + \lpnorm{f \jb{v}^3}{2}\right) + \lpnorm{f \jb{v}^3}{1} \Big)
\end{align}
Finally, by lemma \ref{lemma: sobolev norm of initial data implies finiteness for all t}, using the assumption $\| f_t \|_{H^2_5}^2 < \infty$ on the initial data, one has
\begin{align}
    \frac{\d}{\d t}\KL{f}{g} &\leq 2\lpnorm{\frac{f}{g}}{\infty} \intr |\score g - s|^2_{A\ast g} g \d v + C (1+t) \KL{f}{g}
\end{align}
for some $C = C(f_0, \sup_{0 \leq s \leq t}\lpnorm{g}{\infty})$. Gronwall's inequality completes the proof.
\end{proof}

\section*{Acknowledgements}
We thank Zhenfu Wang, Luis Silvestre, Sehyun Ji, Jingwei Hu, Weiran Sun, and Qin Li for helpful discussions about this work.

\printbibliography

@phdthesis{wang2017mean,
  title={Mean field limit for stochastic particle systems with singular forces},
  author={Wang, Zhenfu},
  year={2017},
  school={University of Maryland, College Park}
}

@article{ilin2025score,
  title={Score-Based Deterministic Density Sampling},
  author={Ilin, Vasily and Sushko, Peter and Hu, Jingwei},
  journal={arXiv preprint arXiv:2504.18130},
  year={2025}
}

@article{tabary2025weak,
  title={Weak-strong uniqueness for the Landau equation by a relative entropy method},
  author={Tabary, C{\^o}me},
  journal={arXiv preprint arXiv:2505.21120},
  year={2025}
}

@article{villani2025fisher,
  title={Fisher Information in Kinetic Theory},
  author={Villani, C{\'e}dric},
  journal={arXiv preprint arXiv:2501.00925},
  year={2025}
}

@article{toscani2000trend,
  title={On the trend to equilibrium for some dissipative systems with slowly increasing a priori bounds},
  author={Toscani, Guiseppe and Villani, C{\'e}dric},
  journal={Journal of Statistical Physics},
  volume={98},
  pages={1279--1309},
  year={2000},
  publisher={Springer}
}

@article{LWX24,
	author = {J. Lu and Y. Wu and Y. Xiang},
	date-added = {2024-05-15 15:59:36 -0700},
	date-modified = {2024-05-15 16:00:34 -0700},
	journal = {J. Comput. Phys.},
	pages = {112859},
	title = {{Score-based transport modeling for mean-field Fokker-Planck equations}},
	volume = {503},
	year = {2024}}

@article{huang2025score,
  title={A score-based particle method for homogeneous landau equation},
  author={Huang, Yan and Wang, Li},
  journal={Journal of Computational Physics},
  pages={114053},
  year={2025},
  publisher={Elsevier}
}

@incollection{Villani02,
	author = {C. Villani},
	booktitle = {Handbook of Mathematical Fluid Mechanics},
	date-modified = {2017-01-21 21:26:42 +0000},
	editor = {S. Friedlander and D. Serre},
	pages = {71--305},
	publisher = {North-Holland},
	title = {A review of mathematical topics in collisional kinetic theory},
	volume = {I},
	year = {2002}}

@article{boffi2023probability,
  title={Probability flow solution of the Fokker--Planck equation},
  author={Boffi, Nicholas M and Vanden-Eijnden, Eric},
  journal={Machine Learning: Science and Technology},
  volume={4},
  number={3},
  pages={035012},
  year={2023},
  publisher={IOP Publishing}
}

@article{carrillo2020landau,
  title={A particle method for the homogeneous Landau equation},
  author={Carrillo, Jose A and Hu, Jingwei and Wang, Li and Wu, Jeremy},
  journal={Journal of Computational Physics: X},
  volume={7},
  pages={100066},
  year={2020},
  publisher={Elsevier}
}

@article{villani1998spatially,
  title={On the spatially homogeneous Landau equation for Maxwellian molecules},
  author={Villani, C{\'e}dric},
  journal={Mathematical Models and Methods in Applied Sciences},
  volume={8},
  number={06},
  pages={957--983},
  year={1998},
  publisher={World Scientific}
}

@article{villani2000decrease,
  title={Decrease of the Fisher information for solutions of the spatially homogeneous Landau equation with Maxwellian molecules},
  author={Villani, C},
  journal={Mathematical Models and Methods in Applied Sciences},
  volume={10},
  number={02},
  pages={153--161},
  year={2000},
  publisher={World Scientific}
}

@article{guillen2023landau,
  author    = {Guillen, Nestor and Silvestre, Luis},
  title     = {The Landau equation does not blow up},
  journal   = {Acta Mathematica},
  volume    = {234},
  number    = {2},
  pages     = {315--375},
  year      = {2025},
  publisher = {International Press of Boston},
}

@article{golding2024global,
  title={Global Smooth Solutions to the Landau--Coulomb Equation in $L^{3/2}$},
  author={Golding, William and Gualdani, Maria and Loher, Am{\'e}lie},
  journal={Archive for Rational Mechanics and Analysis},
  volume={249},
  number={3},
  pages={34},
  year={2025},
  publisher={Springer}
}

@article{ilin2024transport,
  author    = {Ilin, Vasily and Hu, Jingwei and Wang, Zhenfu},
  title     = {Transport based particle methods for the Fokker--Planck--Landau equation},
  journal   = {Communications in Mathematical Sciences},
  volume    = {23},
  number    = {7},
  pages     = {1763--1788},
  year      = {2025},
  publisher = {International Press of Boston},
}

@article{alexandre2013some,
  title={Some a priori estimates for the homogeneous Landau equation with soft potentials},
  author={Alexandre, Radjesvarane and Liao, Jie and Lin, Chunjin},
  journal={Kinetic and Related Models},
  volume={8},
  number={4},
  pages={617--650},
  year={2015},
  publisher={Kinetic and Related Models}
}

\end{document}